\def\subsubsection{\@startsection{subsubsection}{3}%
  \z@{.5\linespacing\@plus.7\linespacing}{.1\linespacing}%
  {\normalfont\itshape}}
\newtheorem{thm}{Theorem}[section]
\newtheorem{conjecture}[thm]{Conjecture}
\newtheorem{proposition}[thm]{Proposition}
\newtheorem{problem}[thm]{Problem}
\newtheorem{corollary}[thm]{Corollary}
\newtheorem{question}[thm]{Question}
\newtheorem{claim}[thm]{Claim}
\newtheorem{proof4.2}[thm]{Proof of Corollary 4.2}
\newcommand{\ignore}[1]{}
\begin{document}

\author{Rebekah Herrman}
\address[Rebekah Herrman]{Department of Industrial and Systems Engineering, The University of Tennessee, Knoxville, TN}
\email[Rebekah Herrman]{rherrma2@tennessee.edu}

\title[Upper bound for the $(d-2)$-leaky forcing number of $Q_d$ and $\ell$-leaky forcing number of $GP(n,1)$] 
{Upper bound for the $(d-2)$-leaky forcing number of $Q_d$ and $\ell$-leaky forcing number of $GP(n,1)$}

\linespread{1.3}
\pagestyle{plain}

\begin{abstract}
Leaky-forcing is a recently introduced variant of zero-forcing that has been studied for families of graphs including paths, cycles, wheels, grids, and trees. In this paper, we extend previous results on the leaky forcing number of the d-dimensional hypercube, $Q_d$, to show that the $(d-2)$-leaky forcing number of $Q_d$ is at most $2^{d-1}$. We also examine a question about the relationship between the size of a minimum $\ell$-leaky-forcing set and a minimum zero-forcing set for a graph $G$.
\end{abstract}
\maketitle 

\section{Introduction}

The zero-forcing game on a simple graph $G = (V,E)$ is a vertex coloring game in which an initial set of vertices is colored blue, and colored vertices ``force'' uncolored ones, which is to say colors them, according to the \textit{color change rule}. The color change rule states that a colored vertex $v$ can color an uncolored vertex $u$ if $u$ is the only uncolored neighbor of $v$. If an initial set of vertices, $B \subset V(G)$, is colored and every vertex in $G$ can be eventually colored after successively applying the color change rule, $B$ is called a zero-forcing set. The objective of the zero-forcing game is to find the smallest zero-forcing set. 

Zero-forcing has several applications in mathematics and the physical sciences. For example, in linear algebra, it can be used to bound the maximum nullity and minimum rank problem \cite{work2008zero, barioli2010zero, huang2010minimum}. It has also been shown to be related to different types of domination, such as power domination and Grundy domination, as well as the general position number of a graph \cite{bozeman2019restricted, lin2019zero, benson2018zero, hua2021zero}. In computer science, it has applications to fast-mixed searching, while in physics, it has applications to quantum control \cite{fallat2016complexity, burgarth2007full}.

Since zero-forcing can be used to model flow through networks, a motivating idea is to determine how flow changes when there are leaks in the network. Thus, the leaky-forcing variant of zero-forcing was introduced in \cite{dillman2019leaky}. In this version, once the zero-forcing set has been colored, leaks are added to vertices of $G$, and a vertex that has a leak added to it is said to be \textit{leaky}. A vertex with a leak does not enforce the color change rule, so a leak can be thought of as a vertex connected to the leaky vertex by a single edge. The $\ell$-forcing number, $Z_{(\ell)}(G)$, is the size of the minimum $\ell$-leaky forcing set, which is the set that can force $G$ even with the addition of $\ell$ leaks. When the context is clear, we shall call the set of vertices of the minimum $\ell$-leaky forcing set $B_{\ell}$. Note the case when $\ell=0$ is the zero-forcing game. The $\ell$-leaky forcing number of paths, cycles, wheels, grids, and trees has been determined, and variants called edge-leaky-forcing, specified-leaky forcing, and mixed-leaky forcing have been introduced, as well \cite{dillman2019leaky, alameda2020generalizations, alameda2020leaky}.

In this paper, we shall concentrate on the leaky-forcing number of the $d$-dimensional discrete hypercube, $Q_d$. The vertex set of $Q_d$ is the set of $0-1$ valued sequences of length $d$, $\epsilon =(\epsilon_i)_{i=1}^d$, where $\epsilon_i = 0$ or $1$. See Fig.~\ref{fig:cube} for $Q_3$. Two sequences are joined by an edge if they differ in exactly one place. Recently, Dillman and Kenter examined the bounds for $Z_{(\ell)}(Q_d)$ and found that  $Z_{(1)}(Q_3) = 4$, $Z_{(2)}(Q_4) = 8$, and $Z_{(3)}(Q_5) = 16$, however they do not extend the result to higher dimensions \cite{dillman2019leaky}. Our main result in this paper is such an extension.

\begin{thm}\label{thm:cube}
$Z_{(d-2)}(Q_d) \leq 2^{d-1}$ for $d \geq 2$.
\end{thm}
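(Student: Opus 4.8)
The plan is to exhibit an explicit set of size $2^{d-1}$ and check that it remains forcing under every admissible placement of leaks. Split $Q_d$ along the last coordinate into two facets $Q^0$ and $Q^1$, each a copy of $Q_{d-1}$, joined by the perfect matching that flips the last coordinate; write $v'$ for the partner of $v$. I take $B = Q^0$, the $2^{d-1}$ vertices whose last coordinate is $0$. Fix any set of at most $d-2$ leaks and split it as $S_0$ (in $Q^0$) and $S_1$ (in $Q^1$), so $|S_0| + |S_1| \le d-2$. First I run one round of forcing: every non-leaky $v \in Q^0$ has $v'$ as its unique uncolored neighbor (all other neighbors lie in $Q^0 = B$), so $v$ forces $v'$ irrespective of the order of forces. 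After this round the only uncolored vertices are the partners of the leaks in $Q^0$, a set $U \subseteq Q^1$ with $|U| = |S_0|$. Since leaky forcing is confluent, it now suffices to show that starting from this configuration the process cannot stall with a nonempty uncolored set.

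Suppose it stalls with terminal uncolored set $U^* \subseteq U \subseteq Q^1$, $U^* \neq \emptyset$. I identify which colored vertices fail to force. For $w = (y,1) \in U^*$ the partner $(y,0) \in Q^0$ has $w$ as its only neighbor in $U^*$, so stalling forces $(y,0)$ to be leaky; these partners are $|U^*|$ distinct leaks in $S_0$. A top vertex $(x,1) \in Q^1 \setminus U^*$ has all of its $U^*$-neighbors inside $Q^1$ (its partner lies in $Q^0$), so it has exactly one neighbor in $U^*$ precisely when $x$ is a unique-neighbor external vertex of the projected set $\bar U^* \subseteq Q_{d-1}$, and each such vertex must also be leaky, hence lies in $S_1$. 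Writing $b(\bar U^*)$ for the number of vertices of $Q_{d-1}$ outside $\bar U^*$ with exactly one neighbor in $\bar U^*$, these exhaust all colored vertices with a single uncolored neighbor, so stalling forces $|U^*| \le |S_0|$ and $b(\bar U^*) \le |S_1|$, whence
\[
b(\bar U^*) + |\bar U^*| \;\le\; |S_1| + |S_0| \;\le\; d-2 \;=\; (d-1)-1 .
\]

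The theorem thus reduces to the following Key Claim in $Q_n$ with $n=d-1$: for every nonempty vertex set $V'$, the number of vertices outside $V'$ having exactly one neighbor in $V'$ is at least $n - |V'|$, that is, $b(V') + |V'| \ge n$. Applying this with $V' = \bar U^*$ contradicts the displayed inequality, so no stall occurs and $B$ forces all of $Q_d$. I would prove the Key Claim by induction on $n$. If some coordinate is constant on $V'$, split $Q_n$ along it: the facet disjoint from $V'$ contributes exactly $|V'|$ unique-neighbor vertices (the partners of $V'$), and the facet containing $V'$ contributes $b_{n-1}(V')$, so by induction $b(V') = b_{n-1}(V') + |V'| \ge \big((n-1)-|V'|\big) + |V'| = n-1 \ge n - |V'|$.

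The delicate case, and the step I expect to be the main obstacle, is when no coordinate is constant on $V'$, where the facet trick is unavailable. Here I would instead prove the sharper bound $b(V') \ge n - \dim(\operatorname{aff-span} V')$, which suffices because $\dim(\operatorname{aff-span} V') \le |V'| - 1$. The idea is to pass to the quotient $\mathbb{F}_2^n / W$ by the direction space $W$ of the affine span (so $V'$ collapses to a point), and to produce a distinct unique-neighbor external vertex for each of the at least $n - \dim W$ coordinate directions $e_i \notin W$. The technical difficulty is that a translate $u \oplus e_i$ of $u \in V'$ may acquire extra ``lateral'' neighbors in $V'$; controlling these — for instance by selecting $u$ and the direction $i$ via a generic real weighting of the vertices of $V'$, so that the candidate forcer sees only $u$ — is the part of the argument that needs the most care, and is where I anticipate the bookkeeping to be heaviest.
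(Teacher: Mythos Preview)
Your reduction is clean and correct: after one round the uncolored set $U^*$ sits inside the top facet, every partner below is forced to be leaky, and every colored top vertex with a unique neighbour in $U^*$ is forced to be leaky, so a stall implies $|\bar U^*|+b(\bar U^*)\le d-2$. Reducing the theorem to the Key Claim ``$|V'|+b(V')\ge n$ for every nonempty $V'\subseteq Q_n$'' is a genuinely different route from the paper. The paper never isolates such a lemma; instead it fixes one uncolored vertex $v_1$ after the first round, counts its colored non-leaky neighbours $x_1,\dots,x_k$ in the top facet, computes $|A|$ for $A=\bigl(\bigcup_i N(x_i)\bigr)\cap Q'\setminus\{v_1\}$ via inclusion--exclusion, and runs a pigeonhole on the at most $k-2$ remaining unlocated leaks to find some $x_i$ whose only uncolored neighbour is $v_1$. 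Your approach is more structural and, if completed, would give a tidy self-contained lemma about hypercubes; the paper's approach is more ad hoc but finishes in one pass.

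The gap is in the delicate case of your Key Claim. You propose to prove the stronger inequality $b(V')\ge n-\dim(\operatorname{aff\text{-}span}V')$, but this is false: in $Q_2$ take $V'=\{00,11\}$, where $\dim W=1$ yet every outside vertex has two neighbours in $V'$, so $b(V')=0<1$; in $Q_3$ take $V'=\{000,011,110\}$, where $\dim W=2$ and again $b(V')=0<1$. In both examples $|V'|=n$, so the Key Claim itself still holds (with equality), and your application only needs it for $|V'|\le n-1$; but the examples show that the intermediate inequality you plan to establish cannot be true in the generality you state, so the quotient-by-$W$ strategy will not produce $n-\dim W$ unique-neighbour vertices in general. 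You would need either to restrict the sharper bound (and then actually prove it in that range, which your ``lateral neighbour'' discussion leaves open), or to find a different argument for the no-constant-coordinate case --- for instance, carrying the induction through the split even when both halves are nonempty, rather than switching to an affine-span bound.
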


This paper is organized as follows. In Sec.~\ref{sec:cube}, we prove Theorem~\ref{thm:cube}. After this, in Section~\ref{sec:prism}, we turn to the prism graph $GP(n,1)$. The prism graph is obtained by taking two vertex-disjoint cycles on $n$ vertices, $u_1, ..., u_n$ and $x_1, ... , x_n$ and adding the $n$ edges $u_1x_1, ..., u_nx_n$. We refer to the former cycle as a whole as $U$ and the latter cycle as $X$. The graph $GP(3,1)$ can be seen in Fig.~\ref{fig:prism}. In this section, we prove the following result.

\begin{thm}\label{thm:prism}

\begin{align*}
Z_{(\ell)}(GP(n,1)) = & \begin{cases}
 \; 3\text{ if } \textup{ $ \ell \in \{0,1\}$ and $n =3$ } \\
  \; 4\text{ if } \textup{ $ \ell \in \{0,1\}$ and $n \geq 4$ or $\ell=2$ and $n=3$} \\
  \;2n \text{ if } \textup{ $ \ell \geq 3$}
\end{cases}\\.
\end{align*}

Furthermore, $Z_{(2)}(GP(3,1)) = 4$, $Z_{(2)}(GP(4,1)) \leq 6$ and $Z_{(2)}(GP(n,1)) \leq n$ for $n > 4$.
\end{thm}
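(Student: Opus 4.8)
The plan is to determine each value by sandwiching it between a lower bound coming from a structural obstruction and an upper bound coming from an explicit leaky-forcing set, using that $Z_{(\ell)}(GP(n,1))$ is non-decreasing in $\ell$ and never exceeds $2n$. I will model a leak at a vertex $v$ as a private uncolored pendant attached to $v$, so that a leaky vertex can never force; the adversary chooses where the $\ell$ leaks go, and a set is an $\ell$-leaky forcing set exactly when it forces for every such choice. The case $\ell \geq 3$ is the quickest: since $GP(n,1)$ is $3$-regular, for any proper subset $B \subsetneq V$ one picks an uncolored vertex $w$ and places three leaks on its three neighbours, so that no neighbour of $w$ may force it and $w$ stays uncolored forever. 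Hence $Z_{(3)} = 2n$, and monotonicity gives $Z_{(\ell)}(GP(n,1)) = 2n$ for all $\ell \geq 3$.

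For the upper bounds I will write down one explicit set per case and check that it forces under every admissible leak placement, cutting the casework with the symmetry group of the prism (rotations and reflections of the cycle together with the $U \leftrightarrow X$ swap). For $n = 3$ and $\ell \leq 1$ the full cycle $B = \{u_1,u_2,u_3\}$ works: each $u_i$ forces $x_i$, and a single leak disables at most one of these rung forces, after which the corresponding $x_i$ is recovered along the triangle $X$. For $n \geq 4$ and $\ell \leq 1$ I will use two consecutive rungs $B = \{u_1,u_2,x_1,x_2\}$; the point is that two colored consecutive vertices on each cycle seed a two-directional sweep around $U$ and around $X$, and the rung edges let a fully swept cycle fill in the other, so a single leak only stalls one direction of one sweep while the remaining routes still reach every vertex. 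For $\ell = 2$ the constructions are $B = \{u_1,u_2,x_1,x_2\}$ when $n = 3$, the six-set $B = \{u_1,u_2,u_3,u_4,x_1,x_2\}$ when $n = 4$, and the whole cycle $B = U$ when $n > 4$; in the last case each $x_i$ is forced by $u_i$ unless $u_i$ is leaky, at most two $x_i$ remain, and they are recovered along $X$ once the cycle is long enough to route around the (at most two) gaps.

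For the lower bounds I will use the fact that to initiate any forcing some colored vertex must have at least two colored neighbours, so a forcing set of a $3$-regular graph has size at least $3$; this gives $Z_{(0)}(GP(3,1)) \geq 3$. To push this to $Z_{(0)}(GP(n,1)) \geq 4$ for $n \geq 4$, I will argue that a zero-forcing set of size $3$ must consist of a vertex together with two of its neighbours, and a short case check on which two neighbours are chosen shows the process deadlocks after the single forced vertex once $n \geq 4$; combined with the matching construction this yields $Z_{(0)} = Z_{(1)} = 4$ there. The equality $Z_{(2)}(GP(3,1)) = 4$ will follow by ruling out all $3$-element sets: a $3$-set that is not already a zero-forcing set fails with no leaks, and for each zero-forcing $3$-set (there are only a few up to symmetry on six vertices) I will exhibit a two-leak placement, such as leaking the two colored neighbours that drive the first force, that produces a deadlock.

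The main obstacle is the $\ell = 2$ regime, where the answer genuinely depends on $n$ and no single clean formula applies. Concretely, one must see why coloring a whole cycle suffices for $n > 4$ but not for $n = 3$ or $n = 4$: when two leaks sit on $U$ at positions $a$ and $b$, the two stranded vertices $x_a, x_b$ must be forced along $X$, and this fails exactly when they are trapped between colored vertices that each still have two uncolored neighbours — which happens for $n = 3$ (all three $x$-vertices mutually adjacent) and for $n = 4$ with antipodal leaks, but never for $n \geq 5$, where the longer cycle always offers a one-sided route. Pinning down a six-vertex set that defeats every two-leak placement for $n = 4$, and verifying the no-deadlock claim for $B = U$ uniformly in $n \geq 5$ (the binding configuration being two leaks two apart on $U$), is where the care is needed; the rest is routine propagation checking organized by the symmetry group.
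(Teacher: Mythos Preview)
Your plan is correct and follows essentially the same route as the paper: the same explicit forcing sets (one full cycle for $n=3$, two adjacent rungs for $n\ge 4$ when $\ell\le 1$; two adjacent rungs for $(\ell,n)=(2,3)$; a full cycle plus two adjacent vertices of the other cycle for $(\ell,n)=(2,4)$; a full cycle for $\ell=2$, $n>4$), the degree obstruction for $\ell\ge 3$, and case-checked lower bounds for small sets. The only substantive difference is that you argue $Z_{(0)}(GP(n,1))\ge 4$ for $n\ge 4$ directly via the deadlock-after-one-force case check, whereas the paper's written argument actually only addresses the $\ell=1$ lower bound; your version is the more complete one here.
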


We then examine the following question from \cite{alameda2020leaky} and provide partial results in Sec.~\ref{sec:question}.

\begin{question}\label{mainquestion}\cite{alameda2020leaky}
Given a graph, is there a minimum $\ell$-leaky forcing set $B_{\ell}$ that contains a minimum zero-forcing set $B$?
\end{question}
\noindent The conclusion follows in Sec.~\ref{sec:conclusion}.

\section{Proof of \ref{thm:cube}}\label{sec:cube}

\begin{figure}
\begin{tikzpicture}
	\begin{scope}[every node/.style={scale=0.5,circle,fill=black,draw}]
    \node (A) at (0,0) {};
    \node (B) at (0,2) {};
	\node (C) at (2,0) {}; 
	\node (D) at (2,2) {};
	\node (E) at (1,2.6) {}; 
	\node (F) at (1,0.6) {};
	\node (G) at (3,.6) {}; 
	\node (H) at (3,2.6) {};

\end{scope}

\begin{scope}[every node/.style={scale=0.6}]
\node at (-.25,-0.25) {000};
\node at (-.25,2.25) {001};
\node at (2.25,-.25) {010};
\node at (0.65,0.7) {100};
\node at (0.65,2.7) {101};
\node at (3.4,2.7) {111};
\node at (3.4,0.7) {110};
\node at (1.8,2.25) {011};

\end{scope}
		

	\draw (A) -- (B);
	\draw (A) -- (C);
	\draw (A) -- (F);
	\draw (B) -- (D);
	\draw (B) -- (E);
	\draw (C) -- (D);
	\draw (C) -- (G);
	\draw (D) -- (H);
	\draw (E) -- (F);
	\draw (E) -- (H);
	\draw (F) -- (G);
	\draw (G) -- (H);

\end{tikzpicture}
\caption{The 3-dimensional hypercube, $Q_3$.}\label{fig:cube}
\end{figure}
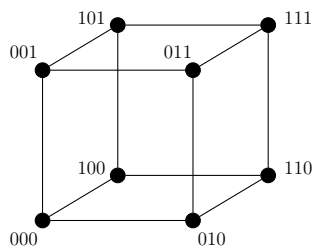

The case $d=2$ follows from the zero-forcing case for cycles, and in any case is trivial. Consider $d \geq 3$. We shall denote the set of all sequences with a 0 in the first position $Q$ and the set of sequences with 1 in the first position $Q'$. Each of these sets has $2^{d-1}$ vertices and spans a $(d-1)$-hypercube. We will show that $Q$ is a $(d-2)$-leaky forcing set. 

Color the vertices of $Q$ blue and apply the color change rule once. After the first application, at most $d-2$ vertices will remain uncolored, and all of these vertices are in $Q'$. If there are no uncolored vertices, the game ends, so suppose there are $u > 0$ uncolored vertices after the first application of the color change rule. Call these uncolored vertices $v_1, ..., v_u$. If we show an arbitrary vertex in this set can be forced, the proof is complete. In order to show that an arbitrary vertex, $v_i$, in this set can be forced, we will show that $v_i$ has at least one neighbor that is not leaky and $v_i$ is the only uncolored neighbor of the non-leaky vertex.

Pick an uncolored vertex, say $v_1$. In $Q'$, every vertex has degree $d-1$, and there can be at most $d-2$ uncolored vertices in $Q'$. Hence, $v_1$ has $c \geq (d-1)-(d-2)+1 = 2$ colored neighbors in $Q'$, where the ``$+1$'' comes from the fact that $v_1$ is not colored. Let $x_1, ... , x_c$ be the colored neighbors of $v_1$. Our goal is to show that one of these vertices can force $v_1$.

Note that $v_1$ has $d-1-c$ uncolored neighbors in $Q'$ since it has degree $d-1$ in $Q'$ and $c$ colored neighbors in $Q'$. Thus, at least $d-c$ leaks are in $Q$ since each vertex in $Q$ has only one neighbor in $Q'$. There are $d-2-(d-c) = c-2 \geq 0$ remaining leaks that can be in either $Q$ or $Q'$. 

Since $v_1$ has $c$ colored neighbors and there can be at most $c-2$ leaks in $Q'$, at least two of these neighbors cannot be leaky. Let $x_1, ..., x_k$ be the $k \geq 2$ colored neighbors of $v_1$ that are not leaky. There are $k$ neighbors of $v_1$ that are not leaky, thus there are $c-k$ that are leaky, so $c-2-(c-k) = k-2$ leaks remain in $Q_d$.  We will now show that $v_1$ is the only uncolored neighbor in $Q'$ of at least one of $x_1, ..., x_k$. 


Let $A=\cup_{i=1}^{k}N(x_i) \cap Q' \setminus v_1$, where $N(x_i)$ denotes the open neighborhood of $x_i$. 

\begin{claim}\label{claim}
There are $k(d-2) - \binom{k}{2}$ distinct vertices in $A$. 
\end{claim}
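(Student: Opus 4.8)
The plan is to count the vertices of $A$ with an inclusion–exclusion argument after identifying each $x_i$ by the coordinate in which it differs from $v_1$. Since $Q'$ is a $(d-1)$-dimensional hypercube and each $x_i$ is a neighbor of $v_1$ inside $Q'$, I would write $x_i = v_1 \oplus e_{a_i}$, where $e_a$ denotes the indicator sequence supported on coordinate $a$ and $a_1, \dots, a_k$ are distinct coordinates among the $d-1$ free positions of $Q'$. Each $x_i$ has degree $d-1$ in $Q'$, one of whose neighbors is $v_1$, so $|N(x_i) \cap Q' \setminus \{v_1\}| = d-2$. Summing over the $k$ vertices gives a total of $k(d-2)$ counted with multiplicity, and the content of the claim is that the number of repetitions is exactly $\binom{k}{2}$.

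First I would pin down the pairwise overlaps. The key hypercube fact is that two distinct neighbors $x_i, x_j$ of $v_1$ lie at Hamming distance $2$, and any two vertices at distance $2$ in a hypercube have exactly two common neighbors. One of these is $v_1$; the other is $w_{ij} := v_1 \oplus e_{a_i} \oplus e_{a_j}$. Hence, inside $Q' \setminus \{v_1\}$ the sets $N(x_i)$ and $N(x_j)$ meet in exactly the single vertex $w_{ij}$, so that $|N(x_i) \cap N(x_j) \cap Q' \setminus \{v_1\}| = 1$ for every pair $i < j$.

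Next I would argue that these pairwise intersections are the only source of overcounting. Two observations suffice. First, the vertex $w_{ij}$ is determined by the unordered pair $\{a_i, a_j\}$ of flipped coordinates, so distinct pairs produce distinct vertices $w_{ij}$, giving $\binom{k}{2}$ of them in all. Second, no vertex can be a common neighbor of three of the $x_i$: if $w$ were adjacent to $x_i, x_j, x_m$, then equating the representations $w = v_1 \oplus e_{a_i} \oplus e_{a_j} = v_1 \oplus e_{a_i} \oplus e_{a_m}$ would force $a_j = a_m$, contradicting distinctness. Thus all triple and higher intersections of the sets $N(x_i) \cap Q' \setminus \{v_1\}$ are empty. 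Feeding the single-term values $d-2$, the pairwise values $1$, and the vanishing higher terms into inclusion–exclusion yields $|A| = k(d-2) - \binom{k}{2}$.

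The only delicate point is the bookkeeping in this last step: one must verify simultaneously that every pair contributes exactly one shared vertex, that these shared vertices are pairwise distinct, and that no vertex is shared by three or more of the $x_i$. All three follow from the single structural feature that each $x_i$ corresponds to flipping a distinct coordinate of $v_1$, so I expect no real obstacle beyond writing the bookkeeping carefully. An equivalent route that sidesteps formal inclusion–exclusion is to say that each $x_i$ contributes $d-2$ neighbors and then subtract one for each of the $\binom{k}{2}$ coincidences $w_{ij}$, having first checked that the $w_{ij}$ are distinct and that they exhaust all coincidences.
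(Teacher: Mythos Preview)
Your proof is correct and follows essentially the same approach as the paper: both count $k(d-2)$ neighbors with multiplicity and then subtract one for each of the $\binom{k}{2}$ pairwise coincidences arising from the unique common neighbor of $x_i$ and $x_j$ other than $v_1$. Your argument is in fact slightly more careful than the paper's, since you explicitly verify that triple intersections are empty and that the $w_{ij}$ are pairwise distinct, points the paper leaves implicit.
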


\begin{proof}[Proof of Claim~\ref{claim}]
Select a vertex $x_i$. Then $|(N(x_i)\setminus v_1) \cap Q'| =d-2$ since each $x_i$ has degree $d-1$ in $Q'$ and each is a neighbor of $v_1$. Thus, $|A| = k(d-2) - \sum_{i \neq j, i,j \in [k]}| N(x_i) \cap N(x_j)|$. To determine $| N(x_i) \cap N(x_j)|$, first note that for all $i \neq j$, the sequences $x_i$ and $x_j$ differ in two positions, say positions one and two.  The sequence that matches $x_i$ in position one and $x_j$ in the rest of the positions is contained in $N(x_i) \cap N(x_j)$, as is the sequence that matches $x_j$ in position one and $x_i$ in the rest of the positions. These are the only two sequences in $N(x_i) \cap N(x_j)$. Thus, $|N(x_i) \cap N(x_j)| =2$, and $v_1$ must be in each intersection. Now $v_1$ has already been subtracted from the $k(d-2)$ term, so it does not need to be subtracted again. Thus, we only need to subtract one for each pair of colored vertices in the neighborhood of $v_1$. Since there are $k$ colored neighbors of $v_1$,  $\binom{k}{2}$ must be subtracted from $k(d-2)$.
 \end{proof}

Continuing the proof of Theorem~\ref{thm:cube}, if at least $k(d-3)- \binom{k}{2} + 1$ elements of $A$ are colored, this implies that one of the $x_i$ has all colored neighbors except for $v_1$, and thus forces $v_1$. To show that $k(d-3)- \binom{k}{2} + 1$ elements of $A$ are colored, first note that at most $k-2$ vertices of $A$ might not be colored since there were $k-2$ leaks whose locations are unknown. Thus, showing $k(d-2) - \binom{k}{2} - (k-2) > k(d-3) - \binom{k}{2} + 1$ completes the proof. To see this inequality, note that $kd-3k+ 2 > kd - 3k + 1$ since $k$ and $d$ are positive integers. Then, $kd - 2k - k + 2 = k(d-2) - (k-2) > k(d-3) + 1$, so $|A|- (k-2) = k(d-2) + \binom{k}{2} - (k-2) > k(d-3) +\binom{k}{2} +  1$, as required. \qed

Since Dillman and Kenter \cite{dillman2019leaky} proved that $Z_{(\ell)}(G) \leq Z_{(j)}(G)$ for $\ell \leq j$, it immediately follows that $Z_{(\ell)}(Q_d) \leq 2^{d-1}$ for every $\ell \leq d-2$.

\section{Proof of \ref{thm:prism}}\label{sec:prism}

Recall that in $GP(n,1)$, we label the vertices of one of the disjoint $n$-cycles $U = u_1, ... , u_n$, we label the vertices of the other $n$-cycle as $X =x_1, ... , x_n$, and all $n$ edges $u_ix_i$ are placed in the graph. 

We shall first prove $Z_{(\ell)}(GP(3,1)) = 3$ for $ l \in \{0,1\}$. We will show $X$ is a minimum $\ell$-leaky forcing set. Color $X$ blue. Since there is at most one leak, two or three vertices in $U$ will be forced after one application of the color change rule. If three are forced, the game is over. If two are forced, they each have exactly one uncolored neighbor and cannot have a leak, so the last vertex is forced after one more application of the color change rule. Thus, $X$ is a $\ell$-leaky forcing set. To show that $Z_{(\ell)}(GP(3,1))$ cannot be less than three, color any two vertices. Since each vertex has degree three, they both have at least two uncolored neighbors, and thus cannot force any vertex.

Now, we show  $Z_{(\ell)}(GP(n,1)) = 4$ for $ \ell \in \{0,1\}$ and $n \geq 4$. Color $u_i, u_{i+1}, x_i$ and $x_{i+1}$  where $i \in \{0,1,2, ..., n\}$ and arithmetic is performed modulo $n$. These vertices each have one uncolored neighbor, so they can force unless they have a leak. However, since at most one vertex has a leak, one vertex of $U$ does not have a leak and has a neighbor  in $X$ cycle that does not have a leak. These two vertices can force around the cycle until they arrive at the leak, in which case everything will have been forced. To show three is not sufficient, color any three vertices. They all have degree three, so at most one of these vertices can force. If it has a leak, however, it cannot, and the graph remains uncolored.


Since the degree of all vertices of $GP(n,1) = 3$, when $\ell \geq 3$, $Z_{(\ell)}(GP(n,1)) = 2n$ follows from the following result by Dillman and Kenter \cite{dillman2019leaky}. 
\begin{proposition}\label{prop:leakyset}\cite{dillman2019leaky}
Every $\ell$-leaky forcing set in a graph contains all vertices of degree at most $\ell$.
\end{proposition}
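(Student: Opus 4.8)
The plan is to prove the contrapositive: I will show that if a candidate set $B$ omits some vertex $v$ with $\deg(v) \leq \ell$, then $B$ cannot be an $\ell$-leaky forcing set. The idea is to let the adversary spend the available leaks to isolate $v$ from every possible forcing move, so that $v$ can never be colored.

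First I would record the only two ways a vertex can become colored during the game: it either belongs to the initial set $B$, or it is colored when the color change rule is applied to it. Since $v \notin B$, the vertex $v$ can only ever be colored by a force, and a force colors $v$ precisely when some colored neighbor $w$ of $v$ has $v$ as its unique uncolored neighbor. By definition a leaky vertex does not enforce the color change rule (equivalently, a leak behaves as a permanently uncolored pendant neighbor), so a leaky neighbor of $v$ can never force $v$.

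Next I would invoke the degree hypothesis directly. Because $\deg(v) \leq \ell$, the adversary can place one leak on each neighbor of $v$ using at most $\ell$ leaks. Once every neighbor of $v$ is leaky, no neighbor can ever force $v$; combined with $v \notin B$, this means $v$ remains uncolored no matter how the color change rule is applied. Hence $B$ does not color all of $G$ under this leak placement, so $B$ is not an $\ell$-leaky forcing set. Contrapositively, every $\ell$-leaky forcing set must contain every vertex of degree at most $\ell$.

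I do not expect a genuine obstacle here; the argument is essentially a one-line observation once the definitions are unwound. The only point needing care is the justification that a not-initially-colored vertex can be colored by no mechanism other than a force from a non-leaky neighbor, which is immediate from the color change rule together with the stipulation that leaky vertices do not enforce it. This is exactly what the application requires: since every vertex of $GP(n,1)$ has degree $3$, the choice $\ell \geq 3$ forces all $2n$ vertices into any leaky forcing set, yielding $Z_{(\ell)}(GP(n,1)) = 2n$.
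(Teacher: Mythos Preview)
Your argument is correct and is exactly the natural one-line observation: if $v\notin B$ has $\deg(v)\le\ell$, place leaks on all of $N(v)$ so that no neighbor can ever force $v$.

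Note, however, that the paper does not supply its own proof of this proposition; it is quoted as a result of Dillman and Kenter~\cite{dillman2019leaky} and used without argument. So there is no in-paper proof to compare against, but your justification is the standard one and would serve perfectly well as a self-contained proof here.
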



To prove $Z_{(2)}(GP(3,1)) = 4$, we first show $Z_{(2)}(GP(3,1)) > 3$ and then give a construction where four vertices in $B$ can force $GP(3,1)$. First, suppose $|B|=3$ for $GP(3,1)$. There are two cases: either all three blue vertices lie on $U$ (by symmetry $X$), or two lie on $U$ and the third is a member of $X$ (by symmetry, two lie on $X$ and one on cycle $U$). If all three lie on $U$ and two have leaks, only one vertex of $X$ is colored after one application of the color change rule. This newly colored vertex has two uncolored neighbors, so it cannot force them. In the second case, if the two vertices on $U$ both have leaks, no vertices can be forced after applying the color change rule, since the only vertex in $B$ without a leak is in $X$ and it has two uncolored neighbors. To show equality, we give a construction where $|B|=4$ and show it can always force the entire graph. Color $u_i, x_i, u_{i+1}$ and $x_{i+1}$ where $i \in \{0,1,2\}$ and arithmetic is performed modulo three. There are three possible ways to place leaks on colored vertices: either both $x_i$ and $x_{i+1}$ are leaky (by symmetry $u_i$ and $u_{i+1}$), $u_i$ and $x_{i+1}$ are leaky (by symmetry, $x_i$ and $u_{i+1}$), or $u_i$ and $x_{i}$ are leaky (by symmetry, $u_{i+1}$ and $x_{i+1}$). In the first case, $u_{i+1}$ forces $u_{i+2}$, which then forces $x_{i+2}$ since $u_{i+2}$ cannot have any leaks. In the second case, $x_{i}$ forces $x_{i+2}$ and $u_{i+1}$ forces $u_{i+2}$, since neither of these have leaks. In the third case, if the leaks are on $u_i$ and $x_i$, $u_{i+1}$ and $x_{i+1}$ force the remaining two uncolored vertices. 

To show that $Z_{(2)}(GP(4,1)) \leq 6$, color $X$, $u_1$, and $u_2$ blue. Note that $u_3$ and $u_4$ have distinct neighborhoods and each have two neighbors that are colored. If the leaks appear on both colored neighbors of $u_3$, then $u_4$ is forced, which can then force $u_3$. If the leaks appear on one neighbor of both $u_3$ and $u_4$, their other colored neighbor can force them after one step. If the leaks appear on $u_3$ and $u_4$, they are forced immediately, and if a leak is placed on $u_3$ and one colored vertex, both $u_3$ and $u_4$ are forced immediately, as well.

Finally, we will show that $Z_{(2)}(GP(n,1)) \leq n$ for $n > 4$. To do this, color $X$ blue. If both leaks are on $u_i$ and $u_j$, all vertices are forced after one step. If one leak is on $x_i$ and one on $u_j$ for some $i$ and $j$, then all $u_k$ are forced after one step except $u_i$. This vertex will be forced at the next step since it has three colored neighbors, at most two of which can have a leak. If the leaks are on $x_i$ and $x_j$ for some $i \neq j$, there exists $x_k$ and $x_{k+1}$ without leaks. These two can force $u_k$ and $u_{k+1}$, which can then begin the forcing process around the cycle of $u$ vertices. \qed


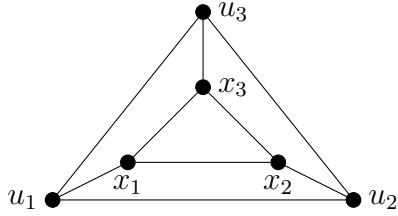
\begin{figure}
\begin{tikzpicture}
	\begin{scope}[every node/.style={scale=0.5,circle,fill=black,draw}]
    \node (A) at (0,0) {};
    \node (B) at (2,0) {};
	\node (C) at (1,1) {}; 
	\node (D) at (-1,-0.5) {};
	\node (E) at (1,2) {}; 
	\node (F) at (3,-0.5) {};

\end{scope}
		
		\begin{scope}[every node/.style={scale=1}]
\node at (-1.4,-.5) {$u_1$};
\node at (3.4,-0.5) {$u_2$};
\node at (1.4,2) {$u_3$};
\node at (0,-0.3) {$x_1$};
\node at (2,-.3) {$x_2$};
\node at (1.4,1) {$x_3$};

\end{scope}

	\draw (A) -- (B);
	\draw (A) -- (C);
	\draw (B) -- (C);
	\draw (A) -- (D);
	\draw (B) -- (F);
	\draw (C) -- (E);
	\draw (D) -- (E);
	\draw (D) -- (F);
	\draw (F) -- (E);

\end{tikzpicture}
\caption{The prism graph $GP(3,1)$.}\label{fig:prism}
\end{figure}

\section{Results related to Question \ref{mainquestion}}\label{sec:question}

By Proposition~\ref{prop:leakyset}, $B \subset B_{\ell}$ for graphs where each vertex has degree at most $\ell$, since every vertex must be in $B_{\ell}$. In this section, we shall study some families of graphs that have vertices with degree more than $\ell$ and prove that they satisfy Question \ref{mainquestion}. 

\subsection{Trees}\label{subsec:trees}
We begin the study of Question \ref{mainquestion} with trees. Proposition ~\ref{prop:leakyset} has the following immediate consequence.

\begin{corollary}\label{cor:trees}
For $\ell \geq 1$, a minimum zero-forcing set of a tree is contained in a minimum $\ell$-leaky forcing set.
\end{corollary}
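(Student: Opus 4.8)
The plan is to prove Corollary~\ref{cor:trees} by exhibiting a minimum zero-forcing set of a tree that is also a minimum $\ell$-leaky forcing set, so that the same set witnesses both numbers and containment is immediate (with equality of the two sets). The key structural fact I would invoke is that for a tree $T$ the zero-forcing number equals the path cover number, and in fact $Z(T) = Z_{(\ell)}(T)$ for all $\ell \geq 1$ — this is the content I expect the author's proof to rest on, since it collapses Question~\ref{mainquestion} to a triviality for trees. Concretely, I would first recall that a minimum zero-forcing set $B$ of a tree corresponds to a minimum partition of $V(T)$ into vertex-disjoint induced paths, where $B$ consists of one endpoint of each path, and the forcing proceeds monotonically along each path.

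Next I would argue that such a path-decomposition forcing process is robust to the addition of $\ell$ leaks, for every $\ell$. The crucial observation is that along each forcing path the active forcing vertex has exactly one uncolored neighbor at each step, and in a tree the chains of forces never ``branch back'' in a way that a single leak could derail without blocking progress elsewhere. I would make this precise by citing Proposition~\ref{prop:leakyset} together with the known result that the $\ell$-leaky forcing number of a tree coincides with its zero-forcing number for all $\ell \geq 1$; once $Z_{(\ell)}(T) = Z(T)$, any minimum $\ell$-leaky forcing set has the same size as a minimum zero-forcing set, and a minimum $\ell$-leaky forcing set is in particular a minimum zero-forcing set (since every $\ell$-leaky forcing set is a zero-forcing set). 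Thus one may simply take $B_\ell = B$: a set realizing $Z_{(\ell)}(T)$ can be chosen to be exactly a minimum zero-forcing set.

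The main obstacle, and the step I would spend the most care on, is justifying that $Z_{(\ell)}(T) = Z(T)$ rather than merely $Z_{(\ell)}(T) \geq Z(T)$ (the latter being automatic since leaky forcing sets are forcing sets). One direction is free; the reverse inequality $Z_{(\ell)}(T) \leq Z(T)$ requires showing that a minimum zero-forcing set actually forces $T$ even against an adversarial placement of $\ell$ leaks. If the paper intends Corollary~\ref{cor:trees} as an ``immediate consequence'' of Proposition~\ref{prop:leakyset} alone, then the intended argument is likely lighter: it would observe that the containment question is about \emph{some} minimum $\ell$-leaky forcing set containing \emph{some} minimum zero-forcing set, and for trees the equality of the numbers makes every minimum $\ell$-leaky forcing set itself a minimum zero-forcing set, giving containment (indeed equality) trivially. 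I would therefore structure the write-up around first establishing $Z_{(\ell)}(T) = Z(T)$ for $\ell \geq 1$, and then deducing that any minimum $\ell$-leaky forcing set is a minimum zero-forcing set, so it contains one — namely itself.
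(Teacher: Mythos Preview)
Your approach rests on the claim that $Z_{(\ell)}(T) = Z(T)$ for every tree $T$ and every $\ell \geq 1$, but this is false. The simplest counterexample is a path $P_n$ with $n \geq 2$: here $Z(P_n) = 1$ (a single endpoint forces the whole path), while $Z_{(1)}(P_n) = 2$, since a leak placed on the lone blue vertex stops all forcing. More generally, Proposition~\ref{prop:leakyset} --- the very result you propose to cite --- forces every leaf of $T$ into any $\ell$-leaky forcing set once $\ell \geq 1$, so $Z_{(\ell)}(T)$ is at least the number of leaves of $T$, which typically exceeds $Z(T)$ (for the star $K_{1,n}$ one gets $Z_{(1)} = n > n-1 = Z$). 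Hence the deduction ``a minimum $\ell$-leaky forcing set has size $Z(T)$ and is therefore itself a minimum zero-forcing set'' collapses, and the whole argument with it.

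The paper's proof goes in almost the opposite direction and is much shorter. It invokes the AIM Minimum Rank Work Group result that a minimum zero-forcing set of a tree can be taken to lie entirely among the leaves. Since leaves have degree $1 \leq \ell$, Proposition~\ref{prop:leakyset} guarantees that \emph{every} $\ell$-leaky forcing set --- minimum or not --- contains all leaves, and in particular contains that leaf-only minimum zero-forcing set. No comparison of $Z_{(\ell)}(T)$ with $Z(T)$ is needed (or available).
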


\begin{proof}
The AIM Minimum Rank– Special Graphs Work Group \cite{work2008zero} showed that a minimum zero-forcing set of a tree is a subset of the leaves. Since leaves have degree $1 \leq \ell$, they must be contained in a minimum $\ell$-leaky forcing set by Proposition ~\ref{prop:leakyset}.
\end{proof}

\subsection{Complete bipartite graphs}\label{subsec:completebipartite}

Recall the complete bipartite graph $K_{m,n}$ consists of two sets of vertices, $X,Y$ such that $|X| = m, |Y| = n$. Then $xy \in E(K_{m,n})$ if and only if $x \in X, y \in Y$. We shall use this notation throughout this subsection and assume without loss of generality that $m \geq n$. We show the leaky forcing sets for $K_{m,n}$ have the following property. 

\begin{corollary}\label{cor:completebipartite}
For $\ell \geq 1$, a minimum zero-forcing set of $K_{m,n}$ is contained in a minimum $\ell$-leaky forcing set.
\end{corollary}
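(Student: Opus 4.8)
The plan is to first pin down the two objects being compared---a minimum zero-forcing set and a minimum $\ell$-leaky forcing set of $K_{m,n}$---and then exhibit an explicit containment. The starting point is that forcing in a complete bipartite graph is very rigid: a colored non-leaky vertex of $X$ can force only when $Y$ has exactly one uncolored vertex (its unique uncolored neighbor), and symmetrically a vertex of $Y$ can force only when $X$ has exactly one uncolored vertex. From this I would deduce that if both parts contain at least two uncolored vertices then no force is possible, so a zero-forcing set must leave at most one vertex uncolored in at least one part; chasing the dynamics shows the minimum zero-forcing sets are exactly the sets containing all but one vertex of $X$ and all but one vertex of $Y$, and in particular $Z(K_{m,n}) = m+n-2$. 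I will assume $m \ge n$ and, to skip the degenerate $K_{1,1}$, that $m \ge 2$.

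Next I would determine $Z_{(\ell)}(K_{m,n})$ in three regimes, using that a vertex of $X$ has degree $n$ and a vertex of $Y$ has degree $m$. For $\ell \ge m$ every vertex has degree at most $\ell$, so Proposition~\ref{prop:leakyset} forces $B_\ell = V(K_{m,n})$. For $n \le \ell < m$, Proposition~\ref{prop:leakyset} puts all of $X$ into $B_\ell$; moreover at least $n-1$ vertices of $Y$ are needed (two uncolored $Y$-vertices stall the process even with no leaks), and the set $X \cup (Y \setminus \{y\})$ does force against any $\ell < m$ leaks because some vertex of $X$ is non-leaky and sees the single uncolored $Y$-vertex as its only uncolored neighbor; hence $Z_{(\ell)} = m+n-1$, with minimum sets $X \cup (Y\setminus\{y\})$. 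The remaining regime $1 \le \ell \le n-1$ is the crux.

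The heart of the argument is the claim that for $1 \le \ell \le n-1$ every such $(m-1,n-1)$ set $B$ (all but one vertex $x^\ast$ of $X$ and all but one vertex $y^\ast$ of $Y$) is already an $\ell$-leaky forcing set, so that $Z_{(\ell)} = m+n-2 = Z$. I would prove this by a two-gate argument: forcing $y^\ast$ becomes possible as soon as some colored vertex of $X$ is non-leaky, and forcing $x^\ast$ as soon as some colored vertex of $Y$ is non-leaky. Because $\ell \le n-1 < (m-1)+(n-1)$, the adversary cannot make every initially colored vertex of both parts leaky, so at least one gate opens; once it does, that entire part is colored, and since $\ell \le n-1 < n \le m$ at least one vertex of the now fully colored part is non-leaky, which opens the other gate (the opposite part still having exactly one uncolored vertex) and completes the forcing. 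Verifying that this survives the worst-case leak placement---including leaks on $x^\ast, y^\ast$ and the boundary value $\ell = n-1$---is the main obstacle; everything else is bookkeeping.

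Finally I would assemble the containment. Given any minimum zero-forcing set $B = (X\setminus\{x^\ast\}) \cup (Y \setminus \{y^\ast\})$: if $\ell \le n-1$ then $B$ is itself a minimum $\ell$-leaky forcing set, so $B_\ell := B$ works; if $n \le \ell < m$ then $B_\ell := B \cup \{x^\ast\} = X \cup (Y\setminus\{y^\ast\})$ is a minimum $\ell$-leaky forcing set containing $B$; and if $\ell \ge m$ then $B_\ell := V(K_{m,n})$ contains $B$. In every case $B \subseteq B_\ell$ with $B_\ell$ minimum, proving the corollary; in fact this establishes the stronger statement that \emph{every} minimum zero-forcing set extends to a minimum $\ell$-leaky forcing set.
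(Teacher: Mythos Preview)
Your proposal is correct and follows essentially the same route as the paper: you first determine $Z_{(\ell)}(K_{m,n})$ in the three regimes (this is exactly Proposition~\ref{prop:completebipartite}) and then exhibit, for an arbitrary minimum zero-forcing set $B=(X\setminus\{x^\ast\})\cup(Y\setminus\{y^\ast\})$, a minimum $\ell$-leaky forcing set containing it in each regime. Your ``two-gate'' argument for the case $\ell\le n-1$ is a slightly cleaner repackaging of the paper's case split on whether all leaks land in one part, but the content is the same.
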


Before proving this result, we will first prove the following result.

\begin{proposition}\label{prop:completebipartite}

\begin{align*}
    Z_{(\ell)}(K_{m,n}) = & \begin{cases}
  \; m+n-2 \text{ if } \textup{ $\ell \leq n-1$} \\ 
  \;m+n-1 \text{ if } \textup{ $ m - 1 \geq \ell > n-1$ } \\
  \;m+n \text{ if } \textup{ $ \ell > m - 1$}
\end{cases}\\
\end{align*}
\end{proposition}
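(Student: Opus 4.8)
The plan is to treat the three ranges of $\ell$ separately, using the degree-based membership guarantee of Proposition~\ref{prop:leakyset} for the lower bounds and explicit colorings with a ``non-leaky neighbor'' count for the matching upper bounds. Throughout I would use that $\deg x = n$ for $x \in X$ and $\deg y = m$ for $y \in Y$, together with $m \ge n$.

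First I would dispose of the two extreme cases. If $\ell > m-1$, then every vertex has degree at most $m \le \ell$, so by Proposition~\ref{prop:leakyset} every vertex lies in each $\ell$-leaky forcing set, giving $Z_{(\ell)}(K_{m,n}) = m+n$; the full vertex set is trivially forcing. If $\ell \le n-1$, the lower bound $Z_{(\ell)}(K_{m,n}) \ge m+n-2$ follows from $Z_{(\ell)} \ge Z_{(0)}$ (the monotonicity $Z_{(\ell)} \le Z_{(j)}$ for $\ell \le j$ from \cite{dillman2019leaky}) together with $Z_{(0)}(K_{m,n}) = m+n-2$, which I would verify directly: if three vertices are left uncolored, then in each distribution of those three vertices across $X$ and $Y$, forcing either cannot start or, after a single force, leaves two uncolored vertices on a common side, where every vertex of the opposite part has two uncolored neighbors and no colored vertex of the same part has any, so the process stalls.

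Next, for the middle range $n-1 < \ell \le m-1$ the lower bound again comes from Proposition~\ref{prop:leakyset}: since $\deg x = n \le \ell$, all of $X$ lies in every $\ell$-leaky forcing set, so a candidate of size $m+n-2$ must leave two vertices of $Y$ uncolored, and two uncolored vertices on a common side stall forcing even with no leaks; hence $Z_{(\ell)} \ge m+n-1$. For the matching upper bound I would color $X$ and all but one vertex $y^{*}$ of $Y$: the vertex $y^{*}$ has $m$ colored neighbors and there are at most $\ell \le m-1$ leaks, so some neighbor is non-leaky and has $y^{*}$ as its only uncolored neighbor, forcing it in one step.

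The \emph{main obstacle} is the upper bound in the range $\ell \le n-1$, where I would color $B = (X \setminus \{x_1\}) \cup (Y \setminus \{y_1\})$, of size $m+n-2$, leaving $x_1$ and $y_1$ uncolored. The difficulty is that there are two uncolored vertices and the adversary can block one forcing direction with leaks, so the argument must exhibit a two-step chain that always completes. The key estimate is that blocking both directions at once would require leaking all $m-1$ colored vertices of $X$ and all $n-1$ colored vertices of $Y$, that is $m+n-2$ leaks, which exceeds $\ell \le n-1$ since $m \ge 2$ (the only excluded degenerate case being $K_{1,1}$). Hence at least one of $x_1, y_1$, say $y_1$, is forced in the first round. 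Once $y_1$ is colored, all of $Y$ is colored, $x_1$ becomes the unique uncolored vertex, and its neighbor set is all of $X$; since $m > \ell$ these cannot all be leaky, so one of them forces $x_1$. The same argument applies symmetrically if $x_1$ is forced first (its side $X$ has size $m > \ell$, the remaining side $Y$ has size $n > \ell$), which shows $B$ is an $\ell$-leaky forcing set and completes the proof that $Z_{(\ell)}(K_{m,n}) = m+n-2$ in this range.
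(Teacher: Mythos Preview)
Your proof is correct and follows essentially the same approach as the paper: the same structural lower bound (at most one uncolored vertex per side), the same degree-based application of Proposition~\ref{prop:leakyset} for the middle and top ranges, and the same explicit colorings for the upper bounds. One slip to fix: in the last paragraph, after $y_1$ is forced the potential forcers of $x_1$ are the vertices of $Y$ (its neighbor set), not $X$, so the relevant inequality is $n>\ell$ rather than $m>\ell$; both hold, so the argument stands.
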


\begin{proof}

 First note that if more than one vertex in $X$ is uncolored, no vertex in $X$ be forced, as every vertex in $Y$ is adjacent to all $X$ and only a vertex in $Y$ can force a vertex in $X$. By symmetry, at most one vertex of $Y$ can be uncolored in the initial leaky-forcing set. Thus, $ Z_{(\ell)}(K_{m,n}) \geq m+n-2$ for all $\ell$.

For the first case, color all but one vertex in each set. Since $\ell\leq \min\{m,n\}-1$, at least one vertex in both $X$ and $Y$ do not have leaks. After the first application of the color change rule, there are two outcomes: either all vertices are colored, or all but one are colored. All vertices are colored if there is at least one leak in each set, since this will leave a colored vertex without a leak in each set. If one vertex is not colored, all of the leaks were on colored vertices in the same set, without loss of generality $X$. Additionally, the uncolored vertex of $X$ does not have a leak since there are at most $n-1$ leaks. The uncolored vertex of $X$ is colored after one application of the color change rule and it then forces the uncolored vertex of $Y$ after the second application of the color change rule. 

Now let us consider the second case. First note that all vertices in $Y$ must be colored by Proposition~\ref{prop:leakyset}, since they will have degree $n \leq \ell$, so this implies $ Z_{(\ell)}(K_{m,n}) \geq m+n-1 $.  Next note that the second case only occurs if $m > n$. Since $m>n$, there is at least one vertex of $Y$ that does not have a leak, so it can force the uncolored vertex of $X$. 

The last case trivially holds from Proposition~\ref{prop:leakyset} since each vertex has degree at most $m$.

\end{proof}

\begin{proof}[Proof of Corollary \ref{cor:completebipartite}]
 First note that the minimum zero-forcing set of $K_{m,n}$ consists of all but one vertex in $X$ and all but one vertex in $Y$. Let $B$ be a minimum zero-forcing set of $K_{m,n}$. $B$ is a minimum $\ell$-leaky forcing set for $\ell \leq n-1$, $Y \cup (B \cap X)$ is a $\ell$-leaky forcing set for $ m - 1 \geq \ell > n-1$, and the set of all vertices is the minimum $\ell$-leaky forcing for $ \ell > m - 1$. All of these sets contain $B$.
\end{proof}

\subsection{Wheel graphs}\label{subsec:wheel}

Our final aim in this section is to consider the leaky forcing number of wheels. For $n \geq 3$, the wheel $W_n$ is obtained from an $n$-cycle by adding one vertex to the graph (called the central vertex) and adding $n$ edges that join the central vertex to each vertex of the cycle. A minimum zero-forcing set for this graph consists of two adjacent cycle vertices and the central vertex. The following result is due to Dillman and Kenter \cite{dillman2019leaky}.

\begin{proposition}\label{prop:wheel}

\begin{align*}
    Z_{(\ell)}(W_n) = & \begin{cases}
  \; 3\text{ if } \textup{ $\ell\in \{0,1\}$} \\ 
  \;\lceil \frac{2n}{3} \rceil  \text{ if } \textup{ $\ell=2$ } \\
  \;n \text{ if } \textup{ $ n > \ell>2$}\\
  \;n+1 \text{ if } \textup{ $ \ell \in \{n, n+1\}$}.
\end{cases}\\
\end{align*}
\end{proposition}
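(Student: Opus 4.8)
The plan is to split the argument along the four ranges of $\ell$, dispatch the three ``outer'' ranges quickly with Proposition~\ref{prop:leakyset} and the monotonicity $Z_{(\ell)}(G)\le Z_{(j)}(G)$ for $\ell\le j$, and concentrate the work on $\ell=2$. Throughout, the governing structure is that every cycle vertex has degree $3$ and the centre $c$ has degree $n$, together with two dynamical facts I would establish first: while $c$ is uncoloured no cycle vertex can colour a cycle-neighbour (it always also sees the uncoloured $c$), so $c$ must be coloured before anything propagates; and the only way to force $c$ is through a non-leaky cycle vertex both of whose cycle-neighbours are already coloured.

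For $n>\ell>2$ I would use that each cycle vertex has degree $3\le\ell$, so Proposition~\ref{prop:leakyset} forces all $n$ cycle vertices into any $\ell$-leaky forcing set and $Z_{(\ell)}(W_n)\ge n$; conversely, colouring exactly the cycle works because $\ell<n$ leaves some cycle vertex non-leaky, and that vertex has $c$ as its unique uncoloured neighbour and forces it. For $\ell\in\{n,n+1\}$ the centre also has degree $\le\ell$, so every vertex lies in the set and $Z_{(\ell)}(W_n)=n+1$ trivially. For $\ell\in\{0,1\}$ I would invoke that the minimum zero-forcing set (two adjacent cycle vertices and $c$) has size $3$, so $Z_{(1)}(W_n)\ge Z_{(0)}(W_n)=3$ by monotonicity; and colouring $c,v_1,v_2$ gives $Z_{(1)}(W_n)\le 3$, since with $c$ coloured the colour sweeps the cycle from both $v_1$ and $v_2$, and a single leak halts only one of the two directions while the other sweeps around the remaining arc.

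The case $\ell=2$ is the crux. Here the extra structural fact is that, once $c$ is coloured, a maximal run of uncoloured cycle vertices is eaten inward from both ends, and two leaks placed on the two coloured vertices flanking a run seal that run; afterwards $c$ can finish a sealed run only if it has length $1$. For the upper bound I would exhibit a periodic pattern of $\lceil 2n/3\rceil$ cycle vertices engineered so that (i) enough arc-interior vertices remain that two leaks cannot prevent $c$ from being forced and (ii) every uncoloured run is short enough to be cleared by the finishing step through $c$. For the lower bound I would argue that any smaller set admits a placement of the two leaks that either leaves no non-leaky arc-interior vertex, so $c$ is never coloured, or seals a run of length $\ge 2$ that $c$ cannot finish.

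The main obstacle is exactly this two-sided bookkeeping for $\ell=2$: the colouring must be robust enough that two adversarial leaks can neither strangle every candidate forcer of the centre nor isolate a long uncoloured run, and one must simultaneously prove optimality via an extremal argument balancing the arc-interior vertices needed to ignite the centre against the run lengths permitted by the single finishing move through the centre. Pinning down the constant in $\lceil 2n/3\rceil$ and verifying the residues of $n\bmod 3$ (and the small values of $n$) is where the delicate casework will concentrate.
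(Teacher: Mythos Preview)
The paper does not prove Proposition~\ref{prop:wheel}; it is quoted verbatim from Dillman and Kenter \cite{dillman2019leaky} and accompanied only by a one-sentence description of their constructions (the centre together with two adjacent cycle vertices for the small-$\ell$ cases, and the full cycle for $n>\ell>2$). There is therefore no in-paper argument to compare your proposal against. Your treatment of the three ``outer'' ranges matches exactly what the paper reports about Dillman and Kenter's sets: Proposition~\ref{prop:leakyset} forces all cycle vertices in for $\ell>2$, the whole vertex set for $\ell\ge n$, and the set $\{c,v_1,v_2\}$ with the two-directional sweep handles $\ell\in\{0,1\}$.

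Where you go beyond what this paper contains is the $\ell=2$ case, and there your proposal is still only a plan. Two points to watch if you flesh it out. First, your lower-bound dichotomy (``either the leaks prevent $c$ from being coloured, or they seal a run of length $\ge 2$'') tacitly assumes $c\notin B$; you must also rule out sets of size $<\lceil 2n/3\rceil$ that include $c$, where only the second alternative is available and the counting has one fewer cycle vertex to work with. Second, your upper-bound pattern uses only cycle vertices, so you must actually verify condition (i), that two leaks cannot kill every arc-interior candidate for forcing $c$; this is exactly where the residue of $n$ modulo $3$ and the small-$n$ checks will bite. None of this is in the present paper, so if you want a genuine comparison you will need to consult \cite{dillman2019leaky} directly.
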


Their proof of the above result is constructive and they show a minimum $\ell$-leaky forcing set of $W_n$ consists of two adjacent cycle vertices and the central vertex in all cases except when $ n > \ell>2$. In their proof for that case, they color all cycle vertices, instead. Three adjacent cycle vertices, however, is a minimum zero-forcing set for $W_n$, as well, since the central vertex is colored after the first application of the color change rule, and the rest of the cycle vertices can then be forced in successive iterations of the color change rule. Thus,

\begin{corollary}\label{cor:wheels}
A minimum zero-forcing set of $W_n$ is contained in a minimum $\ell$-leaky forcing set.
\end{corollary}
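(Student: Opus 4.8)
The plan is to prove Corollary~\ref{cor:wheels} by exhibiting, in every regime of $\ell$, an explicit minimum $\ell$-leaky forcing set of $W_n$ that contains a specific minimum zero-forcing set, namely three consecutive cycle vertices. The first step is to fix, once and for all, the minimum zero-forcing set $B = \{c_1, c_2, c_3\}$ consisting of three pairwise-adjacent vertices of the rim cycle (consecutive around the $n$-cycle). I would first verify that $B$ is indeed a minimum zero-forcing set: since $\{c_1,c_2\}$ together with the hub is a known minimum zero-forcing set of size $3$, and coloring $c_1,c_2,c_3$ forces the hub after one step (the hub is the unique uncolored neighbor of, say, $c_2$ once enough rim neighbors are blue, or more directly once two rim vertices are blue the hub has only rim-uncolored neighbors but the hub forcing argument runs exactly as in the excerpt's remark), the set $B$ has size $3$ and forces $W_n$. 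This is the content of the sentence already granted in the excerpt, so I would cite it rather than re-derive it.

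Next I would split into the four cases of Proposition~\ref{prop:wheel} and, in each, produce a minimum $\ell$-leaky forcing set containing $B$. For $\ell \in \{0,1\}$ the minimum leaky forcing number is $3$, and the set of three consecutive rim vertices is itself a minimum $\ell$-leaky forcing set (one must check the forcing still completes with up to one leak, again essentially by the hub-forcing argument), so $B$ contains $B$ trivially. For $n > \ell > 2$ the number is $n$; here I would take the set of all $n$ rim vertices, which obviously contains the three consecutive rim vertices $B$, and which Dillman and Kenter already showed forces $W_n$ against $\ell$ leaks. For $\ell \in \{n, n+1\}$ the number is $n+1$, i.e.\ the whole vertex set, which contains $B$ vacuously by Proposition~\ref{prop:leakyset}.

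The genuinely delicate case is $\ell = 2$, where $Z_{(2)}(W_n) = \lceil 2n/3 \rceil$ and the size $\lceil 2n/3 \rceil$ can be strictly larger than $3$ yet is a nontrivial fraction of the rim. Here I cannot simply take all rim vertices (too many) nor the standard two-vertices-plus-hub set (wrong size, and it is the construction I want to modify). The plan is to describe the optimal $2$-leaky forcing set concretely as a periodic pattern on the rim that omits roughly every third rim vertex while forcing the hub early, and then to argue that one can \emph{choose the starting point of the periodic pattern} so that some block of three consecutive chosen rim vertices appears; relabelling the cycle so that this block is $c_1,c_2,c_3$ shows the set contains a copy of $B$. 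Verifying that this shifted pattern is still a valid minimum $2$-leaky forcing set (same size, still forces against any two leaks) is where the real work lies, since I must confirm the periodic structure tolerates the shift.

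I expect this $\ell = 2$ case to be the main obstacle: unlike the other regimes, the minimum set is neither the whole rim nor a size-$3$ set, so containment of $B$ is not automatic and depends on the precise combinatorial description of an optimal $\lceil 2n/3 \rceil$-pattern and on small-$n$ boundary behaviour (e.g.\ $n = 3, 4, 5$, where $\lceil 2n/3 \rceil$ may coincide with $3$ or force the three-consecutive block to appear for free). I would handle those small cases by direct inspection and treat the general periodic pattern for larger $n$, taking care that the rounding in $\lceil 2n/3 \rceil$ does not destroy the existence of a length-three consecutive run of selected rim vertices.
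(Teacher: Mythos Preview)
Your plan has a genuine gap at $\ell = 1$. You commit to the single minimum zero-forcing set $B = \{c_1,c_2,c_3\}$ of three consecutive rim vertices and assert that this $B$ is itself a minimum $1$-leaky forcing set. For $n \geq 4$ this is false: put the leak on the middle vertex $c_2$. Then $c_2$ cannot force, while $c_1$ has two uncolored neighbours ($c_n$ and the hub) and $c_3$ has two uncolored neighbours ($c_4$ and the hub), so nothing moves and the process stalls. Hence $\{c_1,c_2,c_3\}$ is not a $1$-leaky forcing set, and since $Z_{(1)}(W_n)=3$, no minimum $1$-leaky forcing set can contain it.

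The paper sidesteps this by \emph{not} fixing a single $B$ across all $\ell$. The corollary only asks, for each $\ell$, that \emph{some} minimum zero-forcing set sit inside \emph{some} minimum $\ell$-leaky forcing set. For every regime except $n>\ell>2$, the Dillman--Kenter construction already contains two adjacent rim vertices together with the hub, and that triple is a minimum zero-forcing set; this disposes of $\ell\in\{0,1\}$, $\ell=2$, and $\ell\in\{n,n+1\}$ in one stroke. Only in the case $n>\ell>2$, where their construction is the full rim (no hub), does the paper invoke the alternative minimum zero-forcing set of three consecutive rim vertices. So the $\ell=2$ periodic-pattern analysis you flagged as ``the genuinely delicate case'' is not needed at all, and the $\ell=1$ obstruction you ran into disappears once you allow yourself to change which minimum zero-forcing set you exhibit.
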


\section{Conclusion}\label{sec:conclusion}

In this paper, we first provided an upper bound for the $(d-2)$-leaky forcing number of $Q_d$, however, the lower bound for $Z_{(d-2)}(Q_d)$ is not known. The partial results on $Q_n$ from \cite{dillman2019leaky} combined with Theorem~\ref{thm:cube}, leads us to believe that 

\begin{conjecture}
$Z_{(d-2)}(Q_d) = 2^{d-1}$. 
\end{conjecture}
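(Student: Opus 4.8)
The plan is to prove Theorem~\ref{thm:cube}, that $Z_{(d-2)}(Q_d) \leq 2^{d-1}$ for $d \geq 2$, by exhibiting an explicit $(d-2)$-leaky forcing set of size $2^{d-1}$ and arguing it forces the whole cube regardless of how the $d-2$ leaks are placed. The natural candidate is a facet of the hypercube: split $V(Q_d)$ according to the first coordinate into $Q = \{\epsilon : \epsilon_1 = 0\}$ and $Q' = \{\epsilon : \epsilon_1 = 1\}$, each a copy of $Q_{d-1}$ with $2^{d-1}$ vertices, and take $Q$ to be the initial blue set. After coloring $Q$ and applying the color change rule once, every vertex of $Q'$ has a unique blue neighbor across the matching (its antipode in the first coordinate in $Q$), so leaks are the only obstruction to forcing. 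First I would observe that after one step the uncolored vertices, if any, all lie in $Q'$ and number at most $d-2$, since at least one coordinate-flip neighbor in $Q$ can only be blocked by a leak and there are only $d-2$ leaks.

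The crux is then a local counting argument showing any single uncolored vertex $v \in Q'$ can always be forced, which by induction (or by repeating) finishes the job. The key steps, in order: (1) bound from below the number $c$ of blue neighbors of $v$ inside $Q'$, getting $c \geq 2$ by a pigeonhole count against the at most $d-2$ uncolored vertices in $Q'$; (2) account for leak placement, noting that forcing $v$ across the matching already consumes at least $d-c$ leaks, leaving at most $c-2$ free leaks, hence at least two blue neighbors $x_1,\dots,x_k$ of $v$ in $Q'$ (with $k\geq 2$) are non-leaky; (3) estimate the size of the combined uncolored-neighborhood set $A = \bigcup_{i=1}^k (N(x_i)\cap Q')\setminus\{v\}$, using that any two of the $x_i$ differ from $v$ in distinct coordinates so their neighborhoods overlap only in $v$ and one other sequence, yielding $|A| = k(d-2) - \binom{k}{2}$; and (4) conclude that the few remaining leaks cannot block all of the $x_i$ simultaneously, so some $x_i$ has $v$ as its only uncolored neighbor and forces it.

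I expect the main obstacle to be step (3) together with the final inequality in step (4): one must carefully justify the inclusion–exclusion for $|A|$ (in particular that the pairwise intersections $N(x_i)\cap N(x_j)$ inside $Q'$ consist of exactly two sequences, namely $v$ and the vertex agreeing with one $x$ in the shared differing coordinate, so the $\binom{k}{2}$ correction is exact and $v$ is not double-subtracted), and then verify that the number of necessarily-colored vertices of $A$ strictly exceeds the threshold at which some $x_i$ must have all of $A$-relevant neighbors colored. Concretely, the budget of free leaks is at most $k-2$, so at most $k-2$ vertices of $A$ can be uncolored; one checks $k(d-2) - \binom{k}{2} - (k-2) > k(d-3) - \binom{k}{2} + 1$, which simplifies to $kd - 3k + 2 > kd - 3k + 1$, true for all positive integers. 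Once this strict inequality is in hand, the existence of a forcing $x_i$ is immediate, and the final remark that $Z_{(\ell)} \leq Z_{(j)}$ for $\ell \leq j$ (Dillman–Kenter) extends the bound to all $\ell \leq d-2$.
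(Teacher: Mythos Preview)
Your proposal does not prove the stated Conjecture: it establishes only the upper bound $Z_{(d-2)}(Q_d)\le 2^{d-1}$, which is Theorem~\ref{thm:cube}, whereas the Conjecture asserts equality. The missing half is the lower bound $Z_{(d-2)}(Q_d)\ge 2^{d-1}$, and nothing in your outline addresses it. In fact the paper does not prove this Conjecture either; it is stated in the conclusion as open for $d\ge 6$ (with the cases $d\in\{2,3,4,5\}$ attributed to Dillman and Kenter). So if your intent was to supply a proof of the Conjecture, you would need an entirely new argument for the lower bound, and none is present.

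If instead your intent was to prove Theorem~\ref{thm:cube} (the upper bound), then your outline is essentially identical to the paper's proof: the same facet $Q$ as the initial blue set, the same observation that at most $d-2$ vertices of $Q'$ remain uncolored after one round, the same counts $c\ge 2$ and $k\ge 2$, the same Claim that $|A|=k(d-2)-\binom{k}{2}$ via the two-element intersections $N(x_i)\cap N(x_j)$, and the same final inequality $k(d-2)-\binom{k}{2}-(k-2)>k(d-3)-\binom{k}{2}+1$. In that case your approach is correct and matches the paper, but it proves Theorem~\ref{thm:cube}, not the Conjecture.
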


Dillman and Kenter showed this conjecture holds for $d \in \{2,3,4,5\}$ in \cite{dillman2019leaky}, so it remains to show it holds when $d \geq 6$. 
 
 
 
We then found $Z_{(\ell)}(GP(n,1))$ for select $n$ and $\ell$ and provided bounds for $Z_{(2)}(GP(n,1))$ when $n \geq 4$. Determining $Z_{(2)}(GP(4,1))$ and $Z_{(2)}(GP(n,1))$ for $n > 4$ is still open, as is finding the $\ell$-leaky forcing number of $GP(n,k)$ for $k \geq 2$ 
 
  \begin{problem}
 Determine $Z_{(\ell)}(GP(n,p))$ when $p \geq 2$,
 \end{problem}
 
 
 
 Finally, we showed that trees, complete bipartite graphs, and wheels satisfy Question~\ref{mainquestion}. Future work includes proving the statement in Question \ref{mainquestion} is true, or finding a counterexample. 
 Clearly if $\ell = |V(G)|$, all vertices must be in $B_{\ell}$, and therefore $B \subset B_{\ell}$. Thus, $\ell$ must be strictly less than $|V(G)|$ in any non-trivial counterexample.  If a zero-forcing set of $G$ consists solely of vertices of degree $\ell$ or less, then the zero-forcing set is guaranteed to be a subset of the $\ell$-leaky-forcing set. Thus, a result that would partially answer Question~\ref{mainquestion} is 
 
 \begin{problem}
Characterize all graphs $G$ such that all elements of the zero-forcing set $B$ have degree at most $\ell$.
 \end{problem}
However, this does not completely resolve Question~\ref{mainquestion}, as  some minimum leaky forcing set may contain vertices with degree greater than $\ell$, such as $K_{m,n}$ with $\ell < \min\{m,n\} -1$. 

Proving or providing a counterexample to the following questions related to Question~\ref{mainquestion} would be of interest, as well.

\begin{question}\label{newquestion}
For a given graph, does there exist a minimum $\ell$-leaky forcing set $B_{\ell}$ such that $B_{\ell}$ contains a minimum $(\ell-k)$-leaky forcing set $B_{\ell-k}$ for some (or all) $\ell \geq k \geq 1$?
\end{question}

\begin{question}\label{newquestion}
For a given graph $G$, does there exist a sequence of nested subsets of vertices $B_0 \subset B_1 \subset ... \subset B_k$ such that $B_{\ell}$ is a minimum $\ell$-leaky forcing set of $G$?
\end{question}

\section*{Acknowledgements}
The author would like to thank B\'ela Bollob\'as for providing helpful comments related to this manuscript.
\bibliographystyle{abbrv}
\bibliography{Manuscript}

\end{document}